\documentclass[12pt]{article}
\usepackage{fullpage}
\usepackage{amsfonts, amsmath, amssymb,latexsym,epsfig}
\usepackage{amsthm}
\usepackage{tikz}
\usetikzlibrary{graphs}
\usetikzlibrary{graphs.standard}
\usepackage{relsize}
\usepackage{verbatim}
\usepackage{enumerate}
\usepackage[skip=10pt plus1pt, indent=20pt]{parskip}
\usepackage{bbding, pifont}

\usepackage{hyperref}
\usepackage{cleveref}

\newtheorem{thm}{Theorem}[section]
\newtheorem{cor}[thm]{Corollary}
\newtheorem{lem}[thm]{Lemma}
\newtheorem{remark}[thm]{Remark}
\newtheorem{prop}[thm]{Proposition}

\newtheorem{ex}[thm]{Example}

\theoremstyle{definition}
\newtheorem{notation}[thm]{Notation}

\newcommand{\uu}{\mathbf{u}}
\newcommand{\vv}{\mathbf{v}}

\newcommand{\Z}{\mathbb Z}

\DeclareMathOperator{\PG}{PG}
\newcommand{\CC}{\mathcal C}
\newcommand{\rowsp}{\mathrm{rowsp} }
\newcommand{\rk}{\mathrm{rk}}
\newcommand{\qbin}[2]{\genfrac{[}{]}{0pt}{}{#1}{#2}}

\newcommand{\F}{\mathbb F}
\newcommand{\dH}{\mathrm{d}_{\mathrm{H}}}
\newcommand{\wH}{\mathrm{w}_{\mathrm{H}}}

\newcommand{\paolo}[1]{{\color{violet} \sf $\star\star$ Paolo: [#1]}}
\newcommand{\jozefien}[1]{{\color{magenta} \sf $\star\star$ Jozefien: [#1]}}
\newcommand{\vlad}[1]{{\color{orange} \sf $\star\star$ Vlad: [#1]}}

\author{
Jozefien D'haeseleer\thanks{Department of Mathematics: Analysis, Logic and Discrete Mathematics, Ghent University, 9000 Ghent, Belgium. 
E-mail: \texttt{jozefien.dhaeseleer@ugent.be}, \texttt{vlad.taranchuk@ugent.be}.}
\and
Francesco Pavese\thanks{Dipartimento di Meccanica, Matematica e Management, Politecnico di Bari, Via Orabona 4, 70125 Bari, Italy. 
E-mail: \texttt{francesco.pavese@poliba.it}, \texttt{paolo.santonastaso@poliba.it}.}
\and
Paolo Santonastaso\footnotemark[2]
\and
Vladislav Taranchuk\footnotemark[1]
}
\title{Chromatic Number of Grassmann Graphs and MRD codes}

\begin{document}

\maketitle

\begin{abstract}

In this paper we investigate the chromatic number of the Grassmann graphs and of their powers, denoted $J_q(n,m,t)$. In this graph, the vertices correspond to the $m$-dimensional subspaces in $\mathbb{F}_q^n$ and two vertices are adjacent if the corresponding subspaces intersect in a subspace of dimension at least $t$. 

By generalizing the lifting technique of Silva, K\"otter and Kschischang, we use \emph{maximum rank distance (MRD)} codes to establish that \[\chi(J_q(n, m, t)) \leq (1 +o(1))n^{m-t}q^{(n-m)(m-t)})\]
when $n \geq 2m$. Given that $J_q(n, m, t)$ is isomorphic to  $J_q(n,n-m,n-2m+t)$, this establishes a new upper bound on $J_q(n, m, t)$ for any valid choice of parameters. Furthermore, we observe that in the regime that $n, m $, and $t$ are fixed, our bound is asymptotically tight, implying that 
$$
\chi(J_q(n, m, t)) = \Theta(q^{(m-t)\max(n-m, m)}).
$$
\end{abstract}

\section{Introduction}

Let $G$ be a graph. We denote by $V(G)$ the vertex set of $G$, and by $E(G)$ the edge set of $G$. A \emph{clique} of $G$ is a set of vertices of $G$ in which every two distinct vertices are adjacent.  
On the other hand, an \emph{independent set} of $G$ is a set of vertices of $G$ no two of which are adjacent.

The \emph{chromatic number} of $G$, denoted by $\chi(G)$, is the smallest integer $c$ such that $V(G)$ can be partitioned into $c$ independent sets. In other words, it corresponds to the minimum number of colours required to colour its vertices such that no two adjacent vertices share the same colour.

The \emph{degree} of a vertex is the number of edges incident to it. 
If $\Delta$ denotes the maximum degree of $G$, that is, the largest degree among its vertices, then it is well known that 
\begin{equation} \label{eq:chromaticlessdegree}
\chi(G) \leq \Delta + 1.
\end{equation}

Starting from a given graph, one can construct a new related graph through the notion of \emph{graph powers}.  
More precisely, if $G$ is a graph, the distance between two vertices 
is the length (number of edges) of a shortest path connecting them.  
The \emph{$k$-th power} of $G$ 
is the graph with the same vertex set $V(G)$ and two vertices are adjacent if and only if their distance in $G$ is at most $k$.  

Let $n > m$.  
The Johnson graph $J(n,m)$ is the graph whose vertex set consists of all $m$-subsets of  
$[n] := \{1,2,\ldots,n\}$, where two vertices are adjacent whenever their intersection has cardinality $m-1$.  
It is well-known \cite{GS} that 
\[
n - m + 1   \le   \chi(J(n,m))   \le   n.
\]

In this paper we study the $q$-analogue of the Johnson graph, namely the \emph{Grassmann graph} $J_q(n,m)$. 
For this, we first briefly mention the notation used throughout this work.
\begin{notation}
    Let $q$ denote a prime power. 
We write $\F_q$ for the finite field with $q$ elements, and $\F_q^n$ for the $n$-dimensional vector space over $\F_q$. 
The projective space of dimension $n$ over $\F_q$ will be denoted by $\PG(n,q)$. 
Recall that a projective subspace of dimension $k$ corresponds to a vector subspace of dimension $k+1$ in $\F_q^{n+1}$. 
\end{notation}
The Gaussian binomial coefficient is defined by
\[
\qbin{n}{m}_q = \frac{(q^n - 1)(q^{n-1}-1)\cdots(q^{ n-m+1}-1)}{(q^m-1)(q^{m-1}-1)\cdots(q-1)}.
\]
Recall that $\qbin{n}{m}_q$ counts the number of $m$-dimensional subspaces of $\F_q^n$, 
or equivalently, the number of $(m-1)$-dimensional subspaces of $\PG(n-1,q)$.
For two vectors $\mathbf{u} = (u_1,\ldots,u_n)$ and $\mathbf{v} = (v_1,\ldots,v_n)$ in $\F_q^n$,  
the \emph{Hamming distance} between $\mathbf{u}$ and $\mathbf{v}$ is denoted by $\dH(\mathbf{u},\mathbf{v})$.  
The \emph{Hamming weight} of $\mathbf{u}$ is defined as 
$
\wH(\mathbf{u}) := \dH(\mathbf{u},\mathbf{0})$. The \emph{Schur product} of $\mathbf{u}$ and $\mathbf{v}$ is defined as $
\mathbf{u} \star \mathbf{v} = (u_1 v_1, \ldots, u_n v_n)$. There is a well-known relation connecting Hamming distance, Hamming weight, and the Schur product, namely
\begin{equation} \label{eq:generalrelationdHammingshur}
\dH(\mathbf{u},\mathbf{v})
= \wH(\mathbf{u})
+ \wH(\mathbf{v})
- 2 \wH(\mathbf{u} \star \mathbf{v}).
\end{equation}
\\
With this notation, the Grassmann graph can be defined as follows.

The vertices of $J_q(n,m)$ are the $m$-dimensional subspaces of $\F_q^n$, and two vertices are adjacent if their intersection has dimension $m-1$.  Recently, the first general upper bound on $\chi(J_q(n, m))$ appeared which improves upon the trivial $\Delta + 1$ bound. These bounds are analogous to the classical bounds for Johnson graphs.

\begin{thm}[\textnormal{\cite[Theorem 1.2]{dhaeseleer2025chromatic}}]
Let $q$ be a prime power and $m<n$. Then 
\[
\qbin{n-m+1}{1}_q   \le   \chi(J_q(n,m))   \le   \qbin{n}{1}_q.
\]
\end{thm}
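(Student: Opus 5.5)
The upper bound comes from an explicit colouring, and the lower bound from a clique.

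\emph{Upper bound.} Fix a primitive element $\alpha$ of $\F_{q^n}$ and identify $\F_q^n$ with $\F_{q^n}$. Every nonzero vector then has the form $\alpha^i$ with $0\le i<q^n-1$. Reduce $i$ modulo $N:=\qbin{n}{1}_q=(q^n-1)/(q-1)$. Each $1$-dimensional subspace then corresponds to a unique residue $i\in\Z/N\Z$, and this gives a labelling of the points of $\PG(n-1,q)$ by $\Z_N$.

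Given an $m$-space $U$, let $S(U)\subseteq \Z_N$ be the set of its $\qbin{m}{1}_q$ point labels, and colour $U$ by $c(U):=\sum_{i\in S(U)} i \bmod N$. Suppose $U,W$ are adjacent. Then $U=\langle X,u\rangle$ and $W=\langle X,w\rangle$ with $\dim X=m-1$. The point sets of $U$ and $W$ share the points of $X$. The remaining $q^{m-1}$ points of $U$ are the points $\langle x+u\rangle$ for $x\in X$, and similarly for $W$.

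For the colouring to be proper we need these two residual sums to differ mod $N$, and this is the main obstacle: additive sums of discrete logarithms are not obviously controlled. So I would change the colouring to a multiplicative one. Set $c(U):=\prod_{\langle v\rangle\subseteq U} v$, taken up to $\F_q^*$, so that $c(U)\in \F_{q^n}^*/\F_q^*$; this group is cyclic of order $N$, giving at most $N$ colours. Since the points of $X$ contribute equally to $c(U)$ and $c(W)$, we need
\[
\prod_{x\in X}(x+u)\not\equiv \prod_{x\in X}(x+w).
\]
Here $\prod_{x\in X}(T+x)$ is the subspace polynomial $L_X(T)$, which is $\F_q$-linear with kernel exactly $X$. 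Equality up to $\F_q^*$ would give $L_X(u)=\lambda L_X(w)$ with $\lambda\in\F_q^*$, hence $L_X(u-\lambda w)=0$ and so $u-\lambda w\in X$. This would force $U=W$, a contradiction. So the colouring is proper with at most $\qbin{n}{1}_q$ colours. One needs to check the bookkeeping of the $\F_q^*$ scalars (the product over representatives of points versus over all vectors), but the subspace-polynomial identity is the key step.

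\emph{Lower bound.} Take the star of all $m$-spaces containing a fixed $(m-1)$-space $Y$. Any two of them meet exactly in $Y$, so they form a clique. The star corresponds to the points of $\F_q^n/Y$, so its size is $\qbin{n-m+1}{1}_q$, and hence $\chi(J_q(n,m))\ge \qbin{n-m+1}{1}_q$.
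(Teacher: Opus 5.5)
The paper does not prove this statement. It is quoted from \cite{dhaeseleer2025chromatic} as background, so there is no in-paper argument to compare against, and your proposal has to stand on its own. It does: it is a correct and complete proof.

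\textbf{Lower bound.} Your lower bound is exactly the clique argument of the paper's Proposition in Section~3, specialised to $t=m-1$.

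\textbf{Upper bound.} The scalar bookkeeping you flag is already settled. Working in $\F_{q^n}^*/\F_q^*$ makes the product independent of the chosen point representatives. The points of $U$ outside $X=U\cap W$ are exactly the $q^{m-1}$ distinct points $\langle x+u\rangle$ with $x\in X$. Hence $c(U)=c_X\,L_X(u)$ and $c(W)=c_X\,L_X(w)$, where $c_X$ is the product over the points of $X$. Both $L_X(u)$ and $L_X(w)$ are nonzero because $u,w\notin X$. Linearity of $L_X$ then forces $u-\lambda w\in X$, hence $U=W$.

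The obstacle you raised for the additive colouring is illusory. The map $\alpha^i\F_q^*\mapsto i \bmod N$ is an isomorphism $\F_{q^n}^*/\F_q^*\to\Z_N$, so $\sum_{i\in S(U)} i \bmod N$ is just the discrete logarithm of your product. The two colourings coincide, and the subspace-polynomial identity is what proves this single colouring proper.

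\textbf{Comparison with the paper's technique.} Your colouring is the Singer-cycle $q$-analogue of the classical colouring $S\mapsto\sum_{a\in S}a \bmod n$ of $J(n,m)$. It has nothing to do with the MRD lifting of Section~4.
\begin{itemize}
\item It is specific to $t=m-1$, because it needs adjacent spaces to differ by a single vector modulo their intersection. The MRD construction handles every power $J_q(n,m,t)$.
\item The two bounds win in different regimes. For $t=m-1$ and $n\ge 2m$, the MRD colouring gives $\chi(J_q(n,m))\le \chi(J(n,m))\,q^{n-m}\le n\,q^{n-m}$. This beats $\qbin{n}{1}_q\approx q^{n-1}$ when $q$ is large compared to $n$.
\item Your bound is better when $n$ is large compared to $q$, which matches the paper's concluding remarks.
\end{itemize}
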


When $m=2$, one recovers the line-incidence graph of the projective space $\PG(n-1,q)$.  
In particular, a colouring of $J_q(n,2)$ using $\qbin{n-1}{1}_q$ colours corresponds to a partition of the lines of $\PG(n-1,q)$ into line-spreads;  
such a partition is called a \emph{line-parallelism}, and the existence of line-parallelisms has long been a central topic in finite geometry \cite{Baker76, Beutelspacher74, Denniston1973, Heering_parallelism_PGn2, Johnson_survey, Johnson2010,   Pavese_parallelism_PG3q, parallelisms_1973}. The existence of line-parallelisms has been settled for various parameter sets, and hence, it is known that 
$$
\chi(J_q(n, 2)) = \qbin{n-1}{1}_q \text{ when } \left\{ \begin{array}{c}
     n = 4 \text{ and any prime power }q   \\
     q = 2, 3, 4, 8, 16 \text{ and any even }n 
\end{array}\right.
$$
When $n$ is odd and $q = 2$, Meszka \cite{Meszka2013} proved that 
$$
\chi(J_2(n, 2)) = \qbin{n-1}{1}_2 + 3.
$$
The only other cases for which there is an improvement over the general upper bound in Theorem 1.2 is again the case $m=2$, when $q=2^e$ and $n$ is even \cite{dhaeseleer2025chromatic}. In particular, for these parameters, it is shown that 
$$
\chi(J_q(n, 2)) < 2\qbin{n-1}{1}_q.
$$

\medskip

In this paper, we investigate the chromatic number of Grassmann graphs and of their powers.  
This work fits into the line of research concerned with determining the chromatic number of powers of graphs, 
a topic that dates back to the seminal papers by Kramer and Kramer in 1969, see \cite{kramer1969farbungsproblem,kramer1969probleme}. Over the years, particular attention has been devoted to this number, 
with special emphasis on the square of graph; see, for instance, \cite{borodin2012list,havet2009choosability,havet2007list,thomassen2018square}.  
More recently, in \cite{abiad2025eigenvalue}, new spectral bounds for the chromatic number of powers of graphs were obtained, 
leading to applications in coding theory.

In this paper, we establish a new upper bound on the chromatic number of the Grassmann graphs as well as their powers. For any $t \in \{1,\ldots,m-1\}$, we define  
\[
J_q(n,m,t)   
\]
to be the $(m-t)$-th power of $J_q(n, m)$. Hence, $J_q(n, m, t)$ is the graph whose vertices are all the $m$-dimensional subspaces of $\F_q^n$,  
and two vertices are adjacent if and only if their intersection has dimension at least $t$. We note that $J_q(n, m, t)$ is regular of degree $\sum_{i=t}^{m-1} \qbin{m}{i}_q\qbin{n-m}{m-i}_qq^{(m-i)^2}$. In this sum, the term $\alpha_i= \qbin{m}{i}_q\qbin{n-m}{m-i}_qq^{(m-i)^2}$ is the number of $m$-spaces meeting a fixed $m$-space in a subspace of dimension $i$, see \cite[Lemma 9.3.2]{BCN}.
For $t=m-1$ we recover the classical Grassmann graph, i.e.,
\[
J_q(n,m,m-1) = 
J_q(n,m).
\]

Our upper bound will be established via an explicit colouring that relies on optimal rank-metric codes. Codes endowed with the rank metric have gained substantial attention over the past decades due to their rich algebraic structure and numerous applications. The modern interest in this area was sparked by the groundbreaking work of Silva, K\"otter and Kschischang  \cite{silva2008rank}, where they employed rank-metric codes as tools in linear random network coding. However, the origins of the subject go back to seminal paper by Delsarte \cite{delsarte1978bilinear}, where rank-metric codes were first introduced from a combinatorial perspective, and to Gabidulin’s independent rediscovery in \cite{ga85a}. 
 We refer the reader to \cite{bartz2022rank, gorla2018codes} for a comprehensive introduction and an overview of their most significant applications. A rank-metric code can be considered a subset of the metric space $(M_{m\times h}(\F_q),\rk)$, where $M_{m\times h}(\F_q)$ denotes the set of $m \times h$ matrices over $\F_q$ and $\rk$ is the matrix rank.
Among them, a particularly important class is that of \emph{maximum rank distance (MRD)} codes. These codes attain the \emph{Singleton-like bound} of Delsarte and therefore have optimal parameters:  
for a prescribed minimum rank distance, they achieve the largest possible cardinality.

\subsection*{Our contribution}
We begin our investigation of $
\chi(J_q(n, m, t))$ by studying the basic structural properties of the graphs.  
In particular, by using the duality symmetry
\[
J_q(n,m,t)  \cong   J_q(n,n-m,n-2m+t),
\]
 we can reduce the analysis of $\chi(J_q(n,m,t))$ to two complementary parameter regimes,
namely $n \ge 2m$ and $m < n < 2m$. 

\medskip
\noindent
The main contribution of this paper is a new colouring technique based on 
\emph{maximum rank distance (MRD)} codes.  
Generalising the lifting construction of Silva, K\"otter and Kschischang, 
we associate to each identifying vector $\mathbf{u}$ of weight $m$ a family of 
$m$-dimensional subspaces obtained from disjoint cosets of an MRD code.  
We show that each such family forms a coclique in $J_q(n,m,t)$, and that different cosets
produce disjoint cocliques.  
This yields an explicit colouring of all vertices with the same identifying vector.

When $n \ge 2m$, this MRD-based construction provides a proper colouring of the entire graph.  
Combining it with a partition of identifying vectors arising from the powers of the Johnson graph $J(n,m)$
gives an upper bound on $\chi(J_q(n,m,t))$ in this regime.  
Using the duality isomorphism, we extend the argument to the complementary range $m < n < 2m$. 

\begin{thm}\label{MainThm}
    Let $q$ be a prime power and $n, m$ be positive integers satisfying $n \geq 2m$. Then 
    $$
    \qbin{n-t}{m-t}_q\leq \chi(J_q(n, m, t)) \leq (1 + o(1))n^{m-t} \qbin{n-t}{m-t}_q.
    $$
    When instead we have $m \leq n < 2m$, then 
    $$
    \qbin{2m-t}{ m-t }_q 
    \le 
    \chi(J_q(n,m,t))
    \le 
    (1 + o(1))n^{m-t} \qbin{2m-t}{ m-t }_q 
    $$
\end{thm}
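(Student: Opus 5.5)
The lower bounds come from cliques, the upper bound from an explicit colouring built on Schubert cells and MRD codes; the case $m\le n<2m$ is then handled by duality.

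\emph{Lower bounds.} Since $\chi(G)\ge\omega(G)$, it suffices to exhibit a large clique. For $n\ge 2m$, fix a $t$-space $T$ and take all $m$-spaces containing $T$. Any two of them meet in at least $T$, so they form a clique of size $\qbin{n-t}{m-t}_q$. For $m\le n<2m$, fix a $(2m-t)$-space $W$ and take all $m$-subspaces of $W$. By Grassmann's formula any two meet in dimension at least $2m-(2m-t)=t$, which gives a clique of size $\qbin{2m-t}{m-t}_q$.

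\emph{Upper bound for $n\ge 2m$.} Write each $m$-space as the row space of a matrix in reduced row echelon form. Its pivot positions give an identifying vector $\mathbf u\in\F_2^n$ of weight $m$, which partitions the vertices into Schubert cells.

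\begin{enumerate}[(1)]
\item \emph{Colouring inside one cell.} Generalising the lifting of Silva, K\"otter and Kschischang, delete the pivot columns. Each vertex with identifying vector $\mathbf u$ is then determined by an $m\times(n-m)$ matrix $A$, constrained by zeros forced by the echelon form. Take a linear MRD code $\mathcal C\subseteq M_{m\times(n-m)}(\F_q)$ with minimum rank distance $m-t+1$, so that $|\mathcal C|=q^{(n-m)t}$ and $\mathcal C$ has $q^{(n-m)(m-t)}$ cosets. Colour a vertex by the coset containing its $A$.
\item \emph{Properness inside a cell.} Two vertices $U,U'$ in the same cell satisfy $\dim(U\cap U')=m-\rk\begin{pmatrix}I&A\\ I&A'\end{pmatrix}+m$, up to a permutation of columns. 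This simplifies to $\dim(U\cap U')=m-\rk(A-A')$. If $A\ne A'$ lie in the same coset, then $\rk(A-A')\ge m-t+1$, so $\dim(U\cap U')\le t-1$ and $U,U'$ are non-adjacent. Thus each cell uses at most $q^{(n-m)(m-t)}$ colours.
\item \emph{Separating cells.} Colour the identifying vectors by a proper colouring of a Johnson-type power graph on $m$-subsets of $[n]$. Join two $m$-subsets when $|\mathbf u\star\mathbf v|$ is large enough that cells with those vectors could contain adjacent vertices. A natural sufficient non-adjacency condition uses~\eqref{eq:generalrelationdHammingshur}: if $U,U'$ meet in dimension $\ge t$, then the echelon structure should force $\wH(\mathbf u\star\mathbf v)\ge t$ after suitable reduction. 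I would therefore colour with a proper colouring of $J(n,m,t)$, the graph where $m$-sets are adjacent if they share at least $t$ points. A standard greedy bound, or a modular construction (colour $S$ by the residues $\sum_{s\in S}s^j \bmod p$ for $j=1,\dots,m-t$, with a prime $p\sim n$), gives $(1+o(1))n^{m-t}$ colours.
\item \emph{Product colouring.} Give each vertex the pair (Johnson colour, coset). Same-colour vertices then either lie in one cell, where step (2) applies, or have identifying vectors sharing fewer than $t$ positions, where the intersection must be small. The total is $(1+o(1))n^{m-t}q^{(n-m)(m-t)}$ colours. Since $\qbin{n-t}{m-t}_q=\Theta(q^{(n-m)(m-t)})$, this is the stated bound up to the $q$-factor. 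For the product to be compared precisely with $\qbin{n-t}{m-t}_q$, I would absorb the lower-order discrepancy into the $o(1)$ term, or refine the count.
\end{enumerate}

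\emph{The case $m\le n<2m$.} Apply the duality $J_q(n,m,t)\cong J_q(n,n-m,n-2m+t)$. Here $n\ge 2(n-m)$, so the first part applies and gives $\qbin{n-(n-2m+t)}{(n-m)-(n-2m+t)}_q=\qbin{2m-t}{m-t}_q$, with the same power $n^{m-t}$.

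The main obstacle is step (4) together with step (3). One must show that vertices from different cells whose identifying vectors share fewer than $t$ positions cannot meet in dimension $\ge t$, or else find the correct Johnson power to use instead. The MRD part alone only handles vertices within a single cell. My first attempt would compare echelon forms: project $U\cap U'$ onto the coordinates where one of the two spaces has pivots, bound its dimension by the number of common pivots, and adjust the Johnson-graph adjacency accordingly if that bound fails.
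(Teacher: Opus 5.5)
\textbf{There is a genuine gap: the cross-cell step is never proved.} Your architecture is the paper's: Schubert cells indexed by identifying vectors, a coset-of-MRD colouring inside each cell, a proper colouring of $J(n,m,t)$ on the identifying vectors, a product colouring, and duality for $m\le n<2m$. Your clique lower bounds and the within-cell identity $\dim(U\cap U')=m-\rk(A-A')$ are correct.

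The missing step is the one you flag yourself. You never show that two $m$-spaces $S,T$ whose identifying vectors are distinct and share at most $t-1$ positions satisfy $\dim(S\cap T)\le t-1$. Without it, same-coloured vertices in different cells are uncontrolled, so the product colouring is not shown to be proper. Your fallback (``find the correct Johnson power'') also leaves the colour count open. The missing fact is
\[
\dim(S\cap T)\le \wH(\vv(S)\star\vv(T)).
\]
The paper imports it from Etzion--Silberstein: their Lemma~2 gives $2m-2\dim(S\cap T)\ge \dH(\vv(S),\vv(T))$, and $\dH(\vv(S),\vv(T))=2m-2\wH(\vv(S)\star\vv(T))$.

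A direct proof uses leading entries rather than projection onto one space's pivot coordinates:
\begin{enumerate}
\item The rows of $E(S)$ have strictly increasing pivots and vanish to the left of them. Hence the first nonzero coordinate of every nonzero $x\in S$ is a pivot position of $S$, and likewise for $T$.
\item An echelon basis of $S\cap T$ has $\dim(S\cap T)$ distinct leading positions. Each is therefore a pivot of both $S$ and $T$.
\end{enumerate}
Equivalently, projecting $S\cap T$ onto the coordinates in the support of $\vv(S)\star\vv(T)$ is injective. With this lemma, $J(n,m,t)$ is exactly the right auxiliary graph, and your step~(4) goes through as written.

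\textbf{Two smaller corrections.}

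First, a greedy colouring of $J(n,m,t)$ uses about $\binom{m}{t}n^{m-t}/(m-t)!$ colours. That is not $(1+o(1))n^{m-t}$ in general; for $m=2$, $t=1$ it is about $2n$. Your power-sum colouring does work, and it is a valid substitute for the Bose--Chowla construction the paper takes from Graham--Sloane. Suppose $|S\cap S'|\ge t$. Then $S\setminus S'$ and $S'\setminus S$ are disjoint, have equal size at most $m-t$, and have equal power sums mod $p$. Newton's identities apply since $p>n\ge m$, and elements of $[n]$ are distinct residues mod $p$. So the two sets coincide, hence both are empty, i.e.\ $S=S'$.

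Second, nothing needs to be absorbed into the $o(1)$ term. We have $\qbin{n-t}{m-t}_q=\prod_{i=0}^{m-t-1}\frac{q^{n-t-i}-1}{q^{m-t-i}-1}\ge q^{(n-m)(m-t)}$, because each factor is at least $q^{n-m}$. So the bound $\chi(J(n,m,t))\,q^{(n-m)(m-t)}$ already implies the stated upper bound.
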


In particular, we highlight that for a fixed $n, m, t$, our result asymptotically determines $\chi(J_q(n, m, t))$ as a function of $q$, implying that 
$$
\chi(J_q(n, m, t)) = \Theta(q^{(m-t)\max(n-m, m)})
$$

The remainder of the paper is organized as follows: In section 2, we introduce the Johnson graphs, their powers, and we collect all known results regarding the chromatic number of these graphs. We briefly mention that our own upper bound $\chi(J_q(n, m , t))$ relies on the the chromatic number of the Johnson graphs, hence we include a short survey of what is already known regarding the chromatic number of these graphs. In section 3, we establish a lower bound on $\chi(J_q(n, m, t))$. In section 4, we give the explicit colouring of $J_q(n, m, t)$ using MRD codes. In section 5, we give some concluding remarks and open questions.

\section{On the chromatic number of the powers of the Johnson graph}
Let $n > m$.  
The \emph{Johnson graph} $J(n,m)$ is the graph whose vertex set consists of all $m$-subsets of 
$[n] := \{1, 2, \dots, n\}$, where two vertices are adjacent whenever their intersection has cardinality $m-1$. We now consider the powers of the Johnson graph.  
For any $t \in \{1, \ldots, m-1\}$, we define
\[
J(n,m,t) 
\]
to be $(m-t)$-th power of $J(n, m)$. Hence, the vertices of  $J(n,m,t)$ are all the $m$-subsets of $[n]$, 
and two vertices are adjacent if their intersection has cardinality at least $t$.  
Clearly, for $t = m-1$, we recover the original Johnson graph:
\[
J(n,m,m-1) = 
J(n,m).
\]

\noindent The study of the chromatic number of $J(n,m,t)$ is closely related to the problem of determining 
the minimum and maximum sizes of constant-weight codes in the Hamming metric.  
We recall this well-known correspondence below and derive the asymptotic order of the chromatic number $\chi(J(n,m,t))$.  
Although these results are classical, we briefly describe and collect them here for completeness, 
since to the best of our knowledge there is no single reference that presents them in a unified form. 

\noindent To any subset $S \subseteq [n]$, we associate its characteristic vector 
$\mathbf{v}_S \in \mathbb{F}_2^n$. 
The $i$-th coordinate of $\mathbf{v}_S$ is equal to $1$ precisely when $i \in S$, 
and equal to $0$ otherwise.
Thus every $m$-subset $S$ of $[n]$ corresponds to a binary vector of
Hamming weight $m$. For, two $m$-subsets $S,T$ of $[n]$, the Shur product $\mathbf{v}_S \star \mathbf{v}_T$ has a $1$
exactly in the coordinates belonging to $S \cap T$, hence
$|S \cap T| = \wH(\mathbf{v}_S \star \mathbf{v}_T)$.
As a consequence of Eq. \eqref{eq:generalrelationdHammingshur}, we have
\[
\dH(\mathbf{v}_S,\mathbf{v}_T)
= \wH(\mathbf{v}_S)+\wH(\mathbf{v}_T)
  - 2 \wH(\mathbf{v}_S \star \mathbf{v}_T),
\]
and since both vectors have weight $m$, we obtain
\[
\dH(\mathbf{v}_S,\mathbf{v}_T)
= 2m - 2|S \cap T|.
\]

Now consider the power of the Johnson graph $J(n,m,t)$.
Hence an independent set $\mathcal{C}$ in $J(n,m,t)$ is precisely a family of
$m$-subsets satisfying
$
|S \cap T| \le t-1, 
$
for all distinct  $S,T\in\mathcal{C}$, which is equivalent to
\[
\dH(\mathbf{v}_S,\mathbf{v}_T) \ge 2(m-t+1).
\]

Therefore, the independence number $\alpha(J(n,m,t))$, i.e. the size of the largest independent set in $J(n,m,t)$, is exactly the size $A(n,m,2(m-t+1))$ of the largest constant-weight binary code of length $n$, weight $m$ and minimum distance $2(m-t+1)$:
\[
\alpha\left(J(n,m,t)\right)
= A\big(n, m,  2(m-t+1)\big).
\]

Since every proper colouring of $J(n,m,t)$ partitions the 
$\binom{n}{m}$ vertices into independent sets, we have the general
bound
\[
\chi\left(J(n,m,t)\right)
\ge
\frac{\binom{n}{m}}{\alpha\left(J(n,m,t)\right)}
=
\frac{\binom{n}{m}}{A(n,m,2(m-t+1))}.
\]

We can now apply the Johnson bound for binary constant–weight codes  \cite[Section 3, Corollary 1]{GS}.
\[
A(n,m,2(m-t+1)) \leq \frac{\binom{n}{m-(m-t+1)+1}}{\binom{m}{m-(m-t+1)+1}}=\frac{\binom{n}{t}}{\binom{m}{t}}\lesssim \frac{(m-t)!n^{t}}{m!} ,
\] 
as $n \rightarrow  \infty$.
Therefore, we obtain a lower bound on the
chromatic number of the power of the Johnson graph \(J(n,m,t)\):
\begin{equation} \label{eq:lowegpowerjohnson}
\chi\left(J(n,m,t)\right)
\ge
\frac{ \binom{n}{m}}
{A(n,m,2(m-t+1))} \gtrsim \frac{n^m}{m!} \cdot \frac{m!}{(m-t)!n^{t}}  \gtrsim \frac{n^{m-t}}{(m-t)!}, 
\end{equation}
as $n \rightarrow  \infty$.

To complement this with an upper bound on \(\chi(J(n,m,t))\), we use an explicit colouring
method which can be found in Graham and Sloane \cite[Section II]{GS}.
Let $p$ be the smallest prime satisfying $p \ge n + 1$. We note that the work of Baker, Harman, and Pintz \cite{BakerHarmanPintz2001} also implies that $n < p < n + n^{0.525} $ for all sufficiently large $n$. Let $r=(p^{m-t + 1} - 1)/(p-1)$. Let $B_{m-t}$ be a Bose-Chowla set in $\Z_r$, that is, the sum of any $m-t$ elements of $B_{m-t}$ is distinct. Such sets exist for any $r$ of the given form and have size $p+1$. Now, let $\Phi:[n] \rightarrow B_{m-t}$ be any one-to-one mapping. To a subset $S$ of size $m$ we assign the colour
\[
   \bar{\Phi}(S)
   :=
       \sum\limits_{a \in S} \Phi(a) \pmod r \in \mathbb{Z}_r.
\]
As proved in \cite[Theorem 9]{GS}, for any element $j \in \mathbb{Z}_r$, $\bar{\Phi}^{-1}(j)$ corresponds to a binary constant–weight
code in $\mathbb{F}_2^n$ of weight $m$ and minimum distance at least $2(m-t+1)$.
Consequently, adjacent vertices in $J(n,m,t)$ always receive distinct colours.
Hence the map $S\mapsto \bar{\Phi}(S)$ is a proper colouring of $J(n,m,t)$, and
\begin{equation} \label{eq:ordergeneralizedJohnson}
   \chi\left(J(n,m,t)\right)
   \le
   r  \leq (1 + o(1))p^{m-t} = (1 + o(1))n^{m - t}
\end{equation}

\noindent Combining the lower bound Eq. \eqref{eq:lowegpowerjohnson} with this
Graham–Sloane upper bound, we conclude that
\[
   \Omega \left( \frac{n^{m-t}}{(m-t)!} \right)
    \le 
   \chi \left(J(n,m,t)\right)
    \le 
    (1 + o(1))n^{m-t},
\]
so, up to multiplicative constants depending only on \(m\) and \(t\),
the chromatic number of \(J(n,m,t)\) grows polynomially in \(n\) with
degree exactly \(m-t\). In particular, this provides polynomial upper and lower bounds on the
growth of the chromatic number of the power of the Johnson graph
\(J(n,m,t)\) as \(n \to \infty\).

\section{Lower bounds on $\chi(J_q(n, m ,t))$}

We begin by investigating some basic properties of $J_q(n,m,t)$. Recall that two graphs $G$ and $G'$ are said to be \emph{isomorphic} if there exists a bijection 
$
\varphi : V(G) \longrightarrow V(G')
$
such that two vertices $x,y \in V(G)$ are adjacent in $G$ if and only if $\varphi(x)$ and $\varphi(y)$ are adjacent in $G'$. We recall that the Grassmann graph $J_q(n,m)$ is isomorphic to the Grassmann graph $J_q(n,n-m)$. As a consequence, the $(m-t)$-th power of the Grassmann graph $J_q(n,m,t)$ is isomorphic to the $(m-t)$-th power of the Grassmann graph $J_q(n,n-m)$, i.e. $J_q(n,n-m,n-2m+t)$ and so
\begin{equation}\label{Isomorphism}
    J_q(n,m,t) \cong J_q(n,n-m,n-2m+t)
\end{equation}
An explicit isomorphism can de obtained as follows. 
To any subspace $U$ of $\F_q^n$, we can associate its (orthogonal) dual subspace, defined by
\[
U^{\perp} := \{ (w_1,\ldots,w_n) \in \F_q^n : \sum_{i=1}^nw_iu_i = 0 \text{ for all } (u_1,\ldots,u_n) \in U  \}.
\]
Then the map $\varphi :V(J_q(n,m,t)) \to V(J_q(n,n-m,n-2m+t))$, defined by $\varphi(U) = U^{\perp}$ is a graph isomorphism.

We now turn to bounds for the chromatic number $\chi(J_q(n,m,t))$. 

\begin{prop}
We have
\[
 \chi(J_q(n,m,t)) \geq 
 \max \left\{ \qbin{n-t}{ m-t }_q,  \qbin{2m-t}{ m-t }_q \right\}
 =
 \begin{cases}
   \qbin{n-t}{ m-t }_q & \text{if } n \geq 2m, \\[6pt]
   \qbin{2m-t}{ m-t }_q & \text{if } 2m > n > m.
 \end{cases}
\]
\end{prop}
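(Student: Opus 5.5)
Since $\chi(G)\ge\omega(G)$, I will prove the bound by exhibiting two large cliques in $J_q(n,m,t)$. All $m$-spaces through a fixed $t$-space are pairwise adjacent, and there are $\qbin{n-t}{m-t}_q$ of them. All $m$-spaces inside a fixed $(2m-t)$-space are also pairwise adjacent, and there are $\qbin{2m-t}{m}_q=\qbin{2m-t}{m-t}_q$ of them. The maximum of the two clique sizes is therefore a lower bound. It remains to compare the two Gaussian coefficients.

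\textbf{Clique 1.} Fix a $t$-dimensional subspace $T\le\F_q^n$ and let $\mathcal{C}_1$ be the set of $m$-dimensional subspaces containing $T$. Any two members intersect in a subspace containing $T$, so in dimension at least $t$; hence $\mathcal{C}_1$ is a clique. Via the quotient $\F_q^n/T\cong\F_q^{n-t}$, $|\mathcal{C}_1|$ equals the number of $(m-t)$-subspaces of $\F_q^{n-t}$, namely $\qbin{n-t}{m-t}_q$.

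\textbf{Clique 2.} Assume $2m-t\le n$; otherwise the second bound must be argued separately, as below. Fix a $(2m-t)$-dimensional subspace $W$ and let $\mathcal{C}_2$ be the set of $m$-subspaces of $W$. For $U,U'\in\mathcal{C}_2$, Grassmann's formula gives
\[
\dim(U\cap U')\ge 2m-(2m-t)=t,
\]
so $\mathcal{C}_2$ is a clique of size $\qbin{2m-t}{m}_q=\qbin{2m-t}{m-t}_q$.

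\textbf{Comparing the two sizes.} Both coefficients have the form $\qbin{N}{m-t}_q$ with $N=n-t$ or $N=2m-t$, and $\qbin{N}{k}_q$ is increasing in $N$ for fixed $k$. So the first bound dominates exactly when $n\ge 2m$, and the second when $n<2m$. This gives the case split in the statement.

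\textbf{The main obstacle.} The delicate point is the regime $m<n<2m$ with $2m-t>n$, i.e.\ $t<2m-n$. Then no $(2m-t)$-space exists inside $\F_q^n$. However, any two $m$-spaces already meet in dimension at least $2m-n>t$, so $J_q(n,m,t)$ is complete. Hence $\chi=\qbin{n}{m}_q$, and the claimed bound $\qbin{2m-t}{m-t}_q$ would exceed it. So for such $t$ the inequality must be read with the graph's natural range $t\ge 2m-n$. That range is exactly what the duality \eqref{Isomorphism} requires, since the parameter $n-2m+t$ must be at least $0$ (or at least $1$).

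I would therefore handle $m<n<2m$ by transporting through \eqref{Isomorphism}. Set $m'=n-m$ and $t'=n-2m+t$. Then $n\ge 2m'$, and Clique 1 in the dual graph gives
\[
\qbin{n-t'}{m'-t'}_q=\qbin{2m-t}{m-t}_q,
\]
matching the statement. This duality check also shows that Clique 1 taken in the dual is precisely Clique 2 in the original graph, since $T'^\perp$ has dimension $2m-t$. So the whole proof reduces to Clique 1 plus duality, with care about the admissible range of $t$.
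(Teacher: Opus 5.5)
Your proof is correct and is essentially the paper's argument: the clique of all $m$-spaces through a fixed $t$-space, combined with the duality \eqref{Isomorphism}. Your direct clique of $m$-spaces inside a $(2m-t)$-space is exactly the dual image of that clique. Your caveat is also right and goes beyond the paper: for $m<n<2m$ and $t<2m-n$ the graph is complete, so $\chi=\qbin{n}{m}_q<\qbin{2m-t}{m-t}_q$ and the stated bound genuinely fails there; the paper's duality step tacitly assumes $n-2m+t\ge 0$. Your monotonicity argument also supplies the justification for the case split, which the paper omits.
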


\begin{proof}
Observe that $J_q(n,m,t)$ contains a clique of size 
\[
 \qbin{n-t}{ m-t }_q,
\]
since this is exactly the number of $m$-dimensional subspaces containing a fixed $t$-dimensional subspace. 
Hence,
\[
 \chi(J_q(n,m,t)) \geq \qbin{n-t}{ m-t }_q,
\]
because all vertices of such a clique must receive distinct colours. Moreover, by Eq. \eqref{Isomorphism}, we know that 
\[
 \chi(J_q(n,m,t)) = \chi\big(J_q(n,n-m,n-2m+t)\big).
\]
Applying the same argument to $J_q(n,n-m,n-2m+t)$ yields
\[
 \chi(J_q(n,m,t)) \geq \qbin{2m-t}{ m-t }_q.
\]
Combining these two bounds gives
\[
 \chi(J_q(n,m,t)) \geq 
 \max \left\{ \qbin{n-t}{ m-t }_q,  \qbin{2m-t}{ m-t }_q \right\},
\]
as claimed.
\end{proof}

\section{Upper bounds on $\chi(J_q(n, m,t))$ via MRD codes}

We begin by briefly recalling the necessary results on rank-metric codes. Let $M_{m\times h}(\F_q)$ denote the set of $m \times h$ matrices over $\F_q$. 
This space can be endowed with the \emph{rank metric}, defined by
\[
d(A,B) = \mathrm{rk}(A-B),
\]
for every $A,B \in M_{m\times h}(\F_q)$.
A \emph{(linear) rank-metric code} $\CC$ is an $\F_q$-subspace of $M_{m\times h}(\F_q)$ equipped with the rank metric. 
The minimum distance of $\CC$ is defined as
\[
d=d(\CC) = \min\{  d(A,B) : A,B \in \CC,  A \neq B  \}.
\]

Delsarte showed in \cite{delsarte1978bilinear} that the parameters of these codes must satisfy the \emph{Singleton-like bound}:
\begin{equation}\label{eq:Singleton} 
|\CC| \leq q^{\max\{m,h\} (\min\{m,h\}-d+1)}. 
\end{equation}
When equality holds in Eq. \eqref{eq:Singleton}, the code $\CC$ is called a \emph{maximum rank distance} (MRD) code.

For any admissible choice of $q,m,h$ and $d$, there exists a rank-metric
code $\CC$ in $M_{m\times h}(\F_q)$ that attains the Singleton-like bound. 
This result was first proved by Delsarte in \cite{delsarte1978bilinear}, and later rediscovered independently by Gabidulin in \cite{ga85a}.

\begin{thm} \label{thm:existenceMRD}
For all $1 \leq d \leq \min\{m,h\}$, there exists an MRD code 
$\CC$ in $M_{m\times h}(\F_q)$ with minimum distance $d(\CC) = d$ and 
\[
|\CC| = q^{\max\{m,h\} (\min\{m,h\}-d+1)}.
\]
\end{thm}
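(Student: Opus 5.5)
The plan is to give the classical Gabidulin construction explicitly. Let $k=\min\{m,h\}$ and $N=\max\{m,h\}$. Transposition $A\mapsto A^{\top}$ is a rank-preserving $\F_q$-linear bijection $M_{m\times h}(\F_q)\to M_{h\times m}(\F_q)$. So it suffices to build the code in $M_{N\times k}(\F_q)$ and transpose if necessary.

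\textbf{Construction.} Fix an $\F_q$-basis $\gamma_1,\dots,\gamma_N$ of $\F_{q^N}$, so every element of $\F_{q^N}$ corresponds to a column vector in $\F_q^N$. Since $k\le N$, choose $g_1,\dots,g_k\in\F_{q^N}$ linearly independent over $\F_q$. Put $s=k-d+1\ge 1$ and consider the $\F_q$-linear space of linearized polynomials
\[
\mathcal L_s=\{f(x)=a_0x+a_1x^q+\dots+a_{s-1}x^{q^{s-1}} : a_i\in\F_{q^N}\}.
\]
To $f\in\mathcal L_s$ associate the $N\times k$ matrix $A_f$ whose $j$-th column is the coordinate vector of $f(g_j)$ in the basis $\gamma_1,\dots,\gamma_N$. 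The map $f\mapsto A_f$ is $\F_q$-linear, so $\CC=\{A_f : f\in\mathcal L_s\}$ is a linear rank-metric code.

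\textbf{Rank bound.} The column space of $A_f$ is isomorphic to the $\F_q$-span of $f(g_1),\dots,f(g_k)$, which is $f(W)$ with $W=\langle g_1,\dots,g_k\rangle_{\F_q}$. Since $f$ is $\F_q$-linear,
\[
\rk(A_f)=k-\dim_{\F_q}(\ker f\cap W).
\]
The kernel of a nonzero $f\in\mathcal L_s$ is an $\F_q$-subspace of $\F_{q^N}$. As a set it consists of roots of a nonzero polynomial of degree at most $q^{s-1}$, so it has at most $q^{s-1}$ elements, i.e.\ dimension at most $s-1$. Hence $\rk(A_f)\ge k-s+1=d$ for every $f\ne 0$. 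This is the key step, and also the only place where something nontrivial (the root count in $\F_{q^N}$) enters.

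\textbf{Size and exact distance.} Taking $f\neq 0$ shows that $f\mapsto A_f$ is injective, so
\[
|\CC|=|\mathcal L_s|=q^{Ns}=q^{\max\{m,h\}(\min\{m,h\}-d+1)}.
\]
Linearity gives $d(\CC)=\min_{f\ne0}\rk(A_f)\ge d$. If we had $d(\CC)\ge d+1$, then Eq.~\eqref{eq:Singleton} would force $|\CC|\le q^{N(k-d)}<q^{N(k-d+1)}$, a contradiction. Thus $d(\CC)=d$ and $\CC$ attains the Singleton-like bound, so it is MRD.

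I expect no real obstacle beyond the kernel-dimension argument. The main care needed is in the bookkeeping of which dimension plays the role of $\max$ versus $\min$, which the transposition handles.
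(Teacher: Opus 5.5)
The paper gives no proof of this theorem and only cites Delsarte and Gabidulin. Your argument is Gabidulin's linearized-polynomial construction from the cited work, and it is correct, including the kernel-dimension bound, the transposition reduction, and the use of Eq.~\eqref{eq:Singleton} to pin down $d(\CC)=d$ exactly.
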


Our explicit colouring of $\chi(J_q(n, m, t))$ relies entirely on this constructive existence result of MRD codes. The key idea is to construct $m$-dimensional subspaces of $\F_q^{n}$ starting from the matrices of disjoint cosets of an MRD code.  
A similar approach can be traced back to the groundbreaking work of Silva, K\"otter and Kschischang \cite{silva2008rank}, 
where rank-metric codes were proposed as a tool for linear random network coding.  

Let $n>m$. Let $I_m$ denote the identity matrix in $M_{m\times m}(\F_q)$. Let $\mathbf{u} = (u_1,\ldots,u_{n}) \in \F_2^{n}$ be a binary vector of Hamming weight $m$, 
and let $A \in M_{m\times n-m}(\F_q)$.  
Let 
\[
r_1 < r_2 < \cdots < r_m \quad \text{and} \quad s_1 < s_2 < \cdots < s_{n-m}
\]
be the indices of the entries of $\mathbf{u}$ equal to $1$ and $0$, respectively.  
We define the matrix \[L_{\mathbf{u}}(A) \in M_{m\times n}(\F_q)\]
as the matrix whose $j$-th column is given, for $j \in \{1,\ldots,n\}$, by
\[
\begin{cases}
\text{the $i$-th column of } I_m, & \text{if } j = r_i, \\[4pt]
\text{the $k$-th column of } A, & \text{if } j = s_k.
\end{cases}
\]

\noindent In other words, $L_{\mathbf{u}}(A)$ is obtained by inserting the columns of the identity matrix $I_m$ 
into the positions of $\F_2^{n}$ indicated by the $1$-entries of $\mathbf{u}$, 
and filling the remaining positions (corresponding to the $0$-entries) with the columns of $A$, in order. For our purposes, we restrict the domain of $L_\mathbf{u}$ only to those $A$ for which $L_\mathbf{u}(A)$ is in reduced-row echelon form.

\begin{ex}
Let $n=5$, $m=2$, and $\mathbf{u} = (1,0,1,0,0)$.  
Then the positions of the $1$'s are $r_1=1$, $r_2=3$, and the positions of the $0$'s are $s_1=2$, $s_2=4$, $s_3=5$.  
For
\[
A =
\begin{bmatrix}
a_{11} & a_{12} & a_{13} \\
0 & a_{22} & a_{23}
\end{bmatrix},
\]
the matrix $L_{\mathbf{u}}(A)$ is given by
\[
L_{\mathbf{u}}(A) =
\begin{bmatrix}
1 & a_{11} & 0 & a_{12} & a_{13} \\
0 & 0 & 1 & a_{22} & a_{23}
\end{bmatrix}.
\]
Hence, the columns of $I_2$ appear in the positions of the $1$'s of $\mathbf{u}$, 
and the columns of $A$ occupy the remaining positions in order.
\end{ex}



\begin{remark}
    Note that if we go from matrices to subspaces, then all columns $s_i$ with $s_i<r_m$ should have zeroes in the last $m-\max\{l|r_l<s_i\}$ rows because of our restriction that the matrix $L_\mathbf{u}(A)$ must be in row echelon form. In particular, $L_u(A)$ is a full rank RREF matrix. 
    In our arguments, we give a colouring of all full rank RREF matrices. We note that these matrices are in one-to-one correspondence with the set of all subspaces of dimension $m$.
\end{remark}
 
In the special case where $\mathbf{u} = (\underbrace{1,\ldots,1}_{m\ \text{times}},\underbrace{0,\ldots,0}_{n-m\ \text{times}}) \in \F_2^{n}$,  
the matrix $L_{\mathbf{u}}(A)$ reduces to
$
[I_m \  A],
$
that is, the $m \times n$ matrix obtained by concatenating $I_m$ and $A$.

For a vector $\mathbf{u} \in \F_2^{n}$ and a matrix $A\in   M_{m \times n-m}(\F_q)$, define the $m$-dimensional subspace
\[
\Lambda_{\mathbf{u}}(A) = \rowsp(L_{\mathbf{u}}(A)) \subseteq \F_q^{n},
\]
where $\rowsp(M)$ denotes the row space of a matrix $M$, and it is called the $\mathbf{u}$-\emph{lifting} of $A$.  
For a subset $\mathcal{D} \subseteq M_{m \times n-m}(\F_q)$, we set the $\mathbf{u}$-\emph{lifting} of $\mathcal{D}$ as
\[
\Lambda_{\uu}(\mathcal{D}) = \{\Lambda_{\uu}(A) : A \in \mathcal{D}\}.
\]

It is easy to verify that, for every binary vector $\mathbf{u} \in \F_2^{n}$ of Hamming weight $m$, 
the mapping 
\[
A \longmapsto \Lambda_{\mathbf{u}}(A)
\]
is injective. In particular, $|\Lambda_{\uu}(\mathcal{D})| = |\mathcal{D}|$, for any $\mathcal{D} \subseteq M_{m \times n-m}(\F_q)$. 

The dimension of the intersection of the lifting of two matrices 
can be computed as follows. This is a reformulation of \cite[Proposition 4]{silva2008rank}, 
but we include a proof for completeness.

\begin{lem} \label{lm:intersectionlifting}
Let $\mathbf{u} \in \F_2^{n}$ be a binary vector of Hamming weight $m$, 
and let $A,B \in M_{m\times n-m}(\F_q)$. Then
\[
\dim\big(\Lambda_{\mathbf{u}}(A) \cap \Lambda_{\mathbf{u}}(B)\big) = m - \rk(A-B).
\]
\end{lem}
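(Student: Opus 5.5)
The plan is to reduce, by a permutation of coordinates, to the special case $\mathbf{u} = (1,\ldots,1,0,\ldots,0)$. Let $P$ be the $n\times n$ permutation matrix sending column positions $r_1,\ldots,r_m$ to $1,\ldots,m$ and $s_1,\ldots,s_{n-m}$ to $m+1,\ldots,n$, preserving order within each group. Then $L_{\mathbf{u}}(A)P = [I_m \ A]$. Right multiplication by $P$ is a linear automorphism of $\F_q^n$ (acting on row vectors), so it maps $\Lambda_{\mathbf{u}}(A)$ onto $\rowsp[I_m\ A]$ and $\Lambda_{\mathbf{u}}(B)$ onto $\rowsp[I_m\ B]$. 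It preserves both intersections and dimensions, so it suffices to prove
\[
\dim\big(\rowsp[I_m\ A]\cap \rowsp[I_m\ B]\big) = m-\rk(A-B).
\]

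For this I use the dimension formula $\dim(U\cap W)=\dim U+\dim W-\dim(U+W)$. Since $\dim U=\dim W=m$, where $U=\rowsp[I_m\ A]$ and $W=\rowsp[I_m\ B]$, it remains to show $\dim(U+W)=m+\rk(A-B)$. The sum $U+W$ is the row space of the stacked $2m\times n$ matrix
\[
\begin{bmatrix} I_m & A\\ I_m & B\end{bmatrix}.
\]
Subtracting the top block from the bottom block, which is an invertible row operation, transforms it into
\[
\begin{bmatrix} I_m & A\\ 0 & B-A\end{bmatrix}.
\]
This matrix has rank $m+\rk(B-A)$: the first $m$ rows are independent thanks to $I_m$, and any combination of the bottom rows that lies in the span of the top rows must vanish in the first $m$ coordinates, forcing the top coefficients to be zero. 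Substituting gives $\dim(U\cap W)=2m-(m+\rk(A-B))=m-\rk(A-B)$.

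There is no serious obstacle. The only point needing care is checking that the column permutation really turns $L_{\mathbf{u}}(A)$ into $[I_m\ A]$ with the columns of $A$ in their original order, and that the permutation acts identically on both subspaces. This holds because both liftings use the same $\mathbf{u}$, hence the same $P$. The row-echelon restriction on the domain plays no role in the argument.
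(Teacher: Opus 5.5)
Your proof is correct and follows the paper's argument essentially step for step. You permute coordinates to reduce to $\mathbf{u}=(1,\ldots,1,0,\ldots,0)$, then combine the dimension formula with the row reduction of the stacked matrix with blocks $I_m, A$ over $I_m, B$ to get $\dim(U+W)=m+\rk(A-B)$. Your explicit justification of the block-triangular rank and your remark that the RREF restriction plays no role are both accurate.
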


\begin{proof}
Let $\mathbf{v} = (\underbrace{1,\ldots,1}_{m \text{ times}},0,\ldots,0) \in \F_2^{n}$.  Since $\mathbf{u}$ has Hamming weight $m$, there exists a permutation matrix $P \in M_{n\times n}(\F_q)$ 
that maps the positions of the $1$'s in $\mathbf{v}$ to those in $\mathbf{u}$, 
so that
\[
L_{\mathbf{u}}(A) = L_{\mathbf{v}}(A) P.
\]
The multiplication by $P$ corresponds to a reordering of the coordinates of $\F_q^{n}$, 
which does not affect the dimension of the intersection of subspaces.  
Hence,
\[
\dim\big(\Lambda_{\mathbf{u}}(A) \cap \Lambda_{\mathbf{u}}(B)\big)
  = \dim\big(\Lambda_{\mathbf{v}}(A) \cap \Lambda_{\mathbf{v}}(B)\big).
\]
We can therefore compute the intersection dimension for $\mathbf{v}$ instead.  
Since $\Lambda_{\mathbf{v}}(A) = \rowsp([I_m \ A])$ and $\Lambda_{\mathbf{v}}(B) = \rowsp([I_m \ B])$, 
both subspaces have dimension $m$, and
\begin{align*} 
\dim(\Lambda_{\mathbf{u}}(A) \cap \Lambda_{\mathbf{u}}(B))& = \dim(\Lambda_{\mathbf{v}}(A) \cap \Lambda_{\mathbf{v}}(B)) \\
& = 2m - \dim(\Lambda_{\mathbf{v}}(A) + \Lambda_{\mathbf{v}}(B)) \\
&= 2m - \dim\big(\rowsp([I_m \ A]) + \rowsp([I_m \ B])\big) \\
&= 2m - \rk\begin{bmatrix}
I_m & A \\
I_m & B
\end{bmatrix} \\
&= 2m - \rk\begin{bmatrix}
I_m & A \\
0 & A-B
\end{bmatrix} \\
&= 2m - \big(m + \rk(A-B)\big) \\
&= m - \rk(A-B),
\end{align*}
as claimed.
\end{proof}

We are now ready to describe a colouring of $J_q(n,m,t)$.

\subsection{Colouring subspaces lifted from an MRD code }\label{sec:MRDidea}

We start by assuming $n \geq 2m$. 
Let $\CC$ be an MRD code in $M_{m\times n-m}(\F_q)$ with minimum distance 
$d(\CC) = m - t + 1$.  
Note that $n-m \geq m$ and by \Cref{thm:existenceMRD}, such a code always exists.  
Moreover,
\[
|\CC| = q^{(n-m)(m-(m-t+1)+1)} = q^{(n-m)t}.
\]

Consider the quotient space $M_{m \times n-m}(\F_q)/\mathcal{C}$ and let $\{ D_i: 0 \leq i \leq  q^{(n-m)(m-t)} - 1 \}$ be any set of representatives of distinct cosets for this quotient space. Hence
\[
\frac{M_{m \times n-m}(\F_q)}{\CC} 
= \{D_i + \CC : 0 \leq i \leq q^{(n-m)(m-t)} - 1 \},
\]
Note that 
\begin{equation} \label{eq:disjointproperties}
(D_i + \CC) \cap (D_j + \CC) = \emptyset
\end{equation}
for all $i \neq j$, and each coset satisfies
\begin{equation} \label{eq:sizecosets}
|D_i + \CC| = |\CC| = q^{(n-m)t}.
\end{equation}

Let $\uu \in \F_2^{n}$ be a binary vector with Hamming weight $m$. For every $i \in \{0,\ldots,q^{(n-m)(m-t)}-1\}$ define
\begin{equation} \label{eq:liftingcosets}
\mathcal{S}_{\uu,i} := \Lambda_{\uu}(D_i+\CC) = \{\Lambda_{\uu}(D_i+A) : A \in \CC\}.
\end{equation}

\begin{prop}\label{prop:disjoint}
For each $i \in \{0,\ldots,q^{(n-m)(m-t)}-1\}$, the set $\mathcal{S}_{\uu,i}$ consists of at most $q^{(n-m)t}$ distinct 
$m$-dimensional subspaces of $\F_q^{n}$ with the property that 
\[
\dim(U \cap U') \leq t-1\]
for all $U,U' \in \mathcal{S}_{\uu,i}$, with $U \neq U'$.
Moreover, $\mathcal{S}_{\uu,i} \cap \mathcal{S}_{\uu,j} = \emptyset$, for all $i \neq j$. 
\end{prop}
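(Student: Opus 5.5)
The plan is to reduce everything to \Cref{lm:intersectionlifting} together with the injectivity of $A \mapsto \Lambda_{\uu}(A)$ noted earlier.

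\textbf{Cardinality.} The set $\mathcal{S}_{\uu,i}$ is the image of the coset $D_i+\CC$ under $\Lambda_{\uu}$. Because of the standing restriction that $L_{\uu}(A)$ be in reduced row echelon form, only those matrices of the coset lying in the admissible domain are actually lifted. Hence $\mathcal{S}_{\uu,i}$ has at most $|D_i+\CC| = q^{(n-m)t}$ elements, by \eqref{eq:sizecosets}. Each element is the row space of a full-rank $m\times n$ matrix, so it is an $m$-dimensional subspace of $\F_q^n$.

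\textbf{Coclique property.} Take $U \neq U'$ in $\mathcal{S}_{\uu,i}$, and write $U=\Lambda_{\uu}(D_i+A)$ and $U'=\Lambda_{\uu}(D_i+A')$ with $A,A'\in\CC$.
\begin{enumerate}[(i)]
\item Since $U\neq U'$, we have $A\neq A'$.
\item \Cref{lm:intersectionlifting} gives
\[
\dim(U\cap U') = m - \rk\big((D_i+A)-(D_i+A')\big) = m - \rk(A-A').
\]
\item Since $\CC$ is linear, $A-A'$ is a nonzero codeword, so $\rk(A-A')\ge d(\CC)=m-t+1$.
\item Therefore $\dim(U\cap U')\le t-1$.
\end{enumerate}
The lemma's proof uses only the permutation of coordinates and the block rank computation. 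These remain valid for any $A,B$, including the restricted echelon-form ones, so no extra care is needed there.

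\textbf{Disjointness.} Suppose $U\in\mathcal{S}_{\uu,i}\cap\mathcal{S}_{\uu,j}$. Then $U=\Lambda_{\uu}(X)=\Lambda_{\uu}(Y)$ with $X\in D_i+\CC$ and $Y\in D_j+\CC$. Injectivity of $\Lambda_{\uu}$ (equivalently, \Cref{lm:intersectionlifting}: $\dim(U\cap U)=m$ forces $\rk(X-Y)=0$) gives $X=Y$. This contradicts \eqref{eq:disjointproperties} when $i\neq j$.

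The only delicate point is being precise that the ``at most'' reflects the restriction of $\Lambda_{\uu}$ to RREF-admissible matrices. Apart from that, the argument is a direct combination of the lemma with the minimum distance of the MRD code.
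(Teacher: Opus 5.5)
Your proposal is correct and follows the paper's proof essentially step for step. The size bound comes from $|D_i+\CC| = q^{(n-m)t}$ together with the RREF restriction on the domain of $L_{\uu}$, disjointness comes from \eqref{eq:disjointproperties} and the injectivity of $\Lambda_{\uu}$, and the coclique property comes from \Cref{lm:intersectionlifting} combined with $d(\CC)=m-t+1$; your remark that the lemma holds for all matrices, not just RREF-admissible ones, is a harmless clarification the paper leaves implicit.
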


\begin{proof}
By Eq. \eqref{eq:sizecosets}, we have $|D_i + \CC| = |\CC| = q^{(n-m)t}$. Since we restrict the domain of $L_u$ to only those matrices $A$ for which $L_u(A)$ is in RREF, it follows that $|\mathcal{S}_{u, i}| \leq |D_i + \CC| = q^{(n-m)t}$.
Moreover, by Eq. \eqref{eq:disjointproperties} and the injectivity of the mapping $\Lambda_{\mathbf{u}}$, 
the families $\mathcal{S}_{\mathbf{u},i}$ are pairwise disjoint. It remains to show that for $U,U' \in \mathcal{S}_{\uu,i}$ with $U \neq U'$, one has 
$\dim(U \cap U') \leq t-1$.  
Indeed, if $U = \Lambda_{\uu}(D_i+A)$ and $U' = \Lambda_{\uu}(D_i+A')$ with $A,A' \in \CC$, then by \Cref{lm:intersectionlifting},
\[
\dim(U \cap U') = m - \rk(A-A').
\]
Since $A \neq A'$ and $\CC$ has minimum distance $d(\CC) = m-t+1$, we obtain $\rk(A-A') \geq m-t+1$, hence $\dim(U \cap U') \leq t-1$, as claimed.
\end{proof}

\subsection{The colouring of all $m$-spaces}\label{Subs:extendedcolouring}

Let $J_q(n, m,t)$ be the power of the Grassmann graph as above. 
Following the notation of \cite{etzion2009error}, to any $m$-dimensional subspace $S$ of $\F_q^{n}$ we can associate an \emph{identifying vector} $\vv(S)$. In particular, let $E(S)$ denote the unique RREF matrix for which rowsp$(E(S)) = S$. Then the identifying vector $\vv(S)$ is a binary vector of length $n$ and weight $m$, where the ones in $\vv(S)$ correspond to the positions (columns) of $E(S)$ in which there is a leading 1.

As an example, let $m=3$, $n=6$ and suppose the subspace $S$ of $\F_q^6$ is given by the row space of 
$$
E(S) = \begin{bmatrix}
    1 & 0 & * & 0 & * & * \\
    0 & 1 & * & 0 & * & * \\
    0 & 0 & 0 & 1 & * & *
\end{bmatrix}
$$
Then $\vv(S) = (1, 1, 0, 1, 0, 0)$, since the leading 1's appear in positions 1, 2, and 4.

\noindent
For two $m$-dimensional subspaces $S,T$ of $\F_q^{n}$, by using Eq. \eqref{eq:generalrelationdHammingshur}, 
the Hamming distance between their respective identifying vectors satisfies
\begin{equation} \label{eq:relationdHammingshur}
\dH(\mathbf{v}(S),\mathbf{v}(T)) 
= \wH(\mathbf{v}(S)) + \wH(\mathbf{v}(T)) 
- 2\wH(\mathbf{v}(S)\star \mathbf{v}(T)) 
= 2m - 2\wH(\mathbf{v}(S)\star \mathbf{v}(T)),
\end{equation}
where $\wH(\cdot)$ denotes the Hamming weight.

Assume that $n \geq 2m$. Partition the identifying vectors, which are the binary vectors of length $n$ and weight $m$, into parts such that for any two vectors $\mathbf{v}=(v_1,\ldots,v_{n})$ and $\mathbf{w}=(w_1,\ldots,w_{n})$ in the same part, we have that \[\wH(\mathbf{v} \star \mathbf{w}) \leq t-1.\]
This corresponds to a partition of the $m$-subsets of a $n$-set into parts such that any two $m$-subsets in the same part intersect in at most $t-1$ elements. Hence, the least number of partition classes is equal to the chromatic number of the Johnson graph $J(n , m, t)$. 

Take such a partition with $\chi(J(n,m,t))$ classes of the identifying vectors, and denote them by $\{ I_1^c, I_2^c, \dots, I_{\chi(J(n, m, t))}^c \}$. Then we partition all $m$-spaces $S$ of $\F_q^n$ in $\chi(J(n, m, t))$ parts, where to $m$-spaces lie in the same part if their identifying vectors belong to the same class $I_j^c$. By a slight abuse of notation, we will denote by $I_j$ the set of all subspaces whose identifying vector belongs to the set $I_j^c$.

We now assign a colour to each subspace.
The colouring method used will be the same for each part $I_j$, however, the colours used for colouring one part $I_j$ are all distinct from those used for any other parts.
Hence, it suffices to describe how we colour the subspaces within a single class $I_j$.

Consider the class of identifying vectors $I_j^c=\{\vv_1,\vv_2,\ldots\}$. For any identifying vector $\vv_h \in I_j^c$ we colour all subspaces $S$ with $\vv(S)=\vv_h$ using the MRD code argument from Section \ref{sec:MRDidea}. Precisely, let $\mathbf{v}_h \in I_j^c$. 
We colour all subspaces $S$ with $\mathbf{v}(S) = \mathbf{v}_h$ as follows.  
For each fixed index $i \in \{0,\ldots,q^{(n-m)(m-t)}-1\}$, we assign the same colour to all subspaces $S \in \mathcal{S}_{\mathbf{v}_h,i}$, 
where $\mathcal{S}_{\mathbf{v}_h,i}$ is defined as in Eq. \eqref{eq:liftingcosets}.  
Different indices $i$ correspond to different colours; that is, 
if $i \neq j$, then the subspaces in $\mathcal{S}_{\mathbf{v}_h,i}$ and the subspaces in $\mathcal{S}_{\mathbf{v}_h,j}$ 
receive distinct colours.

We therefore use $q^{(n-m)(m-t)}$ colours for these subspaces with $\vv(S)=\vv_h$. We repeat this colouring for all spaces $T$ with a fixed identifying vector belonging to $I_j^c$. We arbitrarily reuse the colours that were used to colour all spaces $S$ with $\vv(S) = \vv_h$. Hence, we use $q^{(n-m)(m-t)}$ colours to colour all subspaces in $I_j$.

We can do this for each of the $\chi(J(n, m, t))$ sets $I_j$, and hence this gives a colouring of the graph $J_q(n,m,t)$ with $\chi(J(n, m, t))q^{(n-m)(m-t)}$ colours. 

We now prove that this colouring is a good and valid colouring of  $J_q(n,m,t)$. For this, we first need the following lemma. 

\begin{lem}\label{lem:sameclass}
    Suppose $S$ and $T$ are two subspaces in $I_j$ for some $j$.  Then $\dim(S\cap T)\leq t-1$ or $\vv(S)=\vv(T)$.
\end{lem}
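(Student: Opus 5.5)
The plan is to separate out the case $\vv(S)=\vv(T)$, where there is nothing to prove, and otherwise to bound $\dim(S\cap T)$ by the number of common pivot positions of $S$ and $T$. That number is $\wH(\vv(S)\star\vv(T))$. Suppose $\vv(S)\neq\vv(T)$. Since both identifying vectors lie in the same class $I_j^c$ of the partition coming from the colouring of $J(n,m,t)$, we have $\wH(\vv(S)\star\vv(T))\le t-1$ by construction of the classes. It therefore suffices to prove the general inequality
\[
\dim(S\cap T)\le \wH(\vv(S)\star\vv(T)),
\]
which is the linear-algebraic core of the lemma. In the language of \cite{etzion2009error} it says that the subspace distance dominates the Hamming distance of identifying vectors; this follows from Eq. \eqref{eq:relationdHammingshur}.

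To prove the inequality, I will use the notion of \emph{leading position}: for a nonzero vector $w\in\F_q^n$, it is the index of the first nonzero coordinate of $w$. I will establish two standard facts.

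\textbf{Fact 1.} For any $m$-space $S$, the leading position of every nonzero $w\in S$ is one of the pivot columns of $E(S)$, that is, a position where $\vv(S)$ has a $1$. To see this, write $w=\sum_i c_i e_i$, where $e_1,\dots,e_m$ are the rows of $E(S)$. Let $i_0$ be the smallest index with $c_{i_0}\neq 0$. Rows $e_i$ with $i>i_0$ vanish at and before the pivot of row $i_0$, because pivots strictly increase and the matrix is in echelon form. Row $e_{i_0}$ vanishes before its own pivot. Hence the leading position of $w$ is exactly the pivot of row $i_0$.

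\textbf{Fact 2.} For any subspace $W$, the set of leading positions of its nonzero vectors has exactly $\dim W$ elements. Applying Fact 1 to $W$ itself, the leading positions of $W$ are precisely the pivot columns of its RREF, and there are $\dim W$ of them.

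Now set $W=S\cap T$. By Fact 1 applied to $S$ and to $T$, every leading position of a nonzero vector of $W$ lies in both pivot sets, so it lies in the support of $\vv(S)\star\vv(T)$. By Fact 2 there are $\dim W$ distinct such positions. Hence $\dim(S\cap T)\le \wH(\vv(S)\star\vv(T))\le t-1$, which gives the lemma.

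The only step needing care is Fact 1, namely checking that the echelon structure forces the leading position of a linear combination to be the pivot of the first row with nonzero coefficient. This is routine, and the rest is bookkeeping with the definition of the classes $I_j^c$.
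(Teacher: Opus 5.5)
Your proof is correct and follows the paper's argument: the same case split on $\vv(S)=\vv(T)$, the same key inequality $\dim(S\cap T)\le \wH(\vv(S)\star\vv(T))$, and then the defining property of the class $I_j^c$. The only difference is in how that inequality is obtained: the paper takes it from \cite[Lemma 2]{etzion2009error} (subspace distance dominates the Hamming distance of identifying vectors) combined with Eq.~\eqref{eq:relationdHammingshur}, whereas your leading-position argument proves it directly and makes the lemma self-contained.
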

\begin{proof}
    Suppose $S, T \in I_j$, and  $\vv(S)\neq \vv(T)$. By \cite[Lemma 2]{etzion2009error} and by using Eq. \eqref{eq:relationdHammingshur}, for any two $m$-dimensional subspaces $S$ and $T$ it follows that
\[
  2\dim(S \cap T) \leq 2m - d_H(\vv(S), \vv(T)) 
  = 2m - (2m - 2\wH(\vv(S)\star \vv(T))) 
  = 2\wH(\vv(S)\star \vv(T)).
\]
Hence,
\begin{equation} \label{eq:dimensionidentifyingvector}
  \dim(S \cap T) \leq  \wH(\vv(S)\star \vv(T)) \leq t-1,
\end{equation}
in which the last inequality follows from the definition of $I_j$.
\end{proof}

\begin{thm} \label{th:maincolouringnbig}
    Assume that $n \geq 2m$. The colouring described above is a valid colouring and hence \[\chi(J_q(n, m, t))\leq \chi(J(n, m, t))q^{(n-m)(m-t)}\]
\end{thm}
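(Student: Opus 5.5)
To prove Theorem~\ref{th:maincolouringnbig}, we first check that every $m$-space receives exactly one colour, and then show that any two adjacent vertices get different colours. Once validity is established, the bound follows by counting colours: there are $\chi(J(n,m,t))$ classes $I_j$, each using its own palette of $q^{(n-m)(m-t)}$ colours.

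\textbf{Well-definedness.} Each $m$-space $S$ has a unique RREF matrix $E(S)$ and a unique identifying vector $\vv(S)$. Write $\uu=\vv(S)$. Deleting the pivot columns $r_1,\dots,r_m$ of $E(S)$ leaves a matrix $A\in M_{m\times n-m}(\F_q)$ with $E(S)=L_{\uu}(A)$, so $S=\Lambda_{\uu}(A)$. Because the cosets $D_i+\CC$ partition $M_{m\times n-m}(\F_q)$, the matrix $A$ lies in exactly one coset $D_i+\CC$. Therefore $S$ belongs to exactly one $\mathcal S_{\uu,i}$, using Proposition~\ref{prop:disjoint} and the injectivity of $\Lambda_{\uu}$. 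Finally, $\uu$ lies in exactly one class $I_j^c$, so the colour of $S$ is well defined.

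\textbf{Properness.} Let $S\neq T$ be adjacent, i.e.\ $\dim(S\cap T)\ge t$. We split into cases.

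\emph{Case 1: $S$ and $T$ lie in different classes $I_j\neq I_{j'}$.} The palettes of different classes are disjoint, so $S$ and $T$ receive different colours.

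\emph{Case 2: $S,T\in I_j$ for the same $j$.} By Lemma~\ref{lem:sameclass}, adjacency forces $\vv(S)=\vv(T)=:\uu$. Suppose, for contradiction, that $S$ and $T$ have the same colour. Within the class $I_j$, colours for a fixed identifying vector are indexed by cosets, so $S,T\in\mathcal S_{\uu,i}$ for the same $i$. Proposition~\ref{prop:disjoint} then gives $\dim(S\cap T)\le t-1$, contradicting adjacency. Hence $S$ and $T$ have different colours.

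\textbf{The point needing care.} The argument relies on one feature of the construction. Colours are shared across \emph{different} identifying vectors within the same class $I_j$, since the palette is reused arbitrarily. This sharing is harmless because Lemma~\ref{lem:sameclass} shows that two spaces in the same class with different identifying vectors are never adjacent. For a \emph{single} identifying vector, however, distinct cosets must receive distinct colours. So the colouring map must be taken injective in $i$ for each fixed $\vv_h$, which is exactly what the construction specifies.

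With validity established, the total number of colours used is $\chi(J(n,m,t))\,q^{(n-m)(m-t)}$, which gives the stated upper bound on $\chi(J_q(n,m,t))$.
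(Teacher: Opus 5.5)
Your proof is correct and follows the paper's argument essentially step for step: different classes $I_j$ use disjoint palettes, Lemma~\ref{lem:sameclass} forces equal identifying vectors within a class, and Proposition~\ref{prop:disjoint} rules out adjacency inside a single $\mathcal{S}_{\uu,i}$. Your explicit well-definedness check fills in a step the paper only calls ``immediately clear''; there you recover $A$ by deleting the pivot columns of $E(S)$, so that $S$ lies in exactly one $\mathcal{S}_{\uu,i}$.
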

\begin{proof}
    It is immediately clear that every subspace belongs to a certain class $I_j$, and that therefore every subspace has been assigned a colour in the above colouring.
We must therefore still show that two subspaces with the same colour have a subspace of at most dimension $t-1$ in common.
Suppose that $S$ and $T$ are subspaces with the same colour $c$, and assume by contradiction that $\dim(S\cap T)\geq t$. Since two subspaces from two different sets $I_j\neq I_i$ have different colours by definition, we know that there exists a $j$ such that $S,T\in I_j$. Moreover, Lemma \ref{lem:sameclass} implies that $\mathbf{v}(S) = \mathbf{v}(T)$; hence $S$ and $T$ are coloured according to the same MRD code idea.  
Let $\mathbf{u} := \mathbf{v}(S) = \mathbf{v}(T)$.  
Then $S \in \mathcal{S}_{\mathbf{u},h}$ and $T \in \mathcal{S}_{\mathbf{u},k}$ for some $h,k \in \{0,\ldots,q^{(n-m)(m-t)}-1\}$.  
Since $S$ and $T$ have the same colour, it follows that $h = k$.  However, by Proposition \ref{prop:disjoint}, any two distinct subspaces within the same family $\mathcal{S}_{\mathbf{u},h}$ intersect in dimension at most $t-1$.  
This contradicts our assumption that $\dim(S \cap T) \ge t$.  
Therefore, two subspaces with the same colour cannot intersect in dimension greater than $t-1$, 
and the colouring is proper. This completes the proof 
\end{proof}

Using the bound on the chromatic number of the powers of the Johnson graph described in Eq. \eqref{eq:ordergeneralizedJohnson}, 
we obtain the following result.

\begin{cor}\label{cor1}
     Assume that $n \geq 2m$. We have \[\chi(J_q(n, m, t))=  O(n^{m-t}q^{(n-m)(m-t)})\]
\end{cor}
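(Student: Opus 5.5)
This is a direct combination of Theorem~\ref{th:maincolouringnbig} with the Graham--Sloane upper bound for the Johnson graph powers recorded in Eq.~\eqref{eq:ordergeneralizedJohnson}. The plan is to start from
\[
\chi(J_q(n,m,t)) \leq \chi(J(n,m,t))\, q^{(n-m)(m-t)},
\]
which holds for $n \geq 2m$ by Theorem~\ref{th:maincolouringnbig}, and then replace $\chi(J(n,m,t))$ by an explicit polynomial in $n$ of degree $m-t$.

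For the Johnson factor I will use the Graham--Sloane colouring. Let $p$ be the smallest prime with $p \geq n+1$; Bertrand's postulate gives $p \leq 2n+2$. A Bose--Chowla set in $\Z_r$ with $r=(p^{m-t+1}-1)/(p-1)$ then yields a proper colouring of $J(n,m,t)$ with $r$ colours. Since
\[
r = 1+p+\cdots+p^{m-t} \leq 2p^{m-t},
\]
we get $\chi(J(n,m,t)) \leq 2(2n+2)^{m-t}$. This is $O(n^{m-t})$, with the implied constant depending only on $m-t$. If we want the sharper form $(1+o(1))n^{m-t}$ as $n\to\infty$, we can instead use the Baker--Harman--Pintz bound $p < n+n^{0.525}$, exactly as in Eq.~\eqref{eq:ordergeneralizedJohnson}.

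Multiplying the two factors gives $\chi(J_q(n,m,t)) = O(n^{m-t}q^{(n-m)(m-t)})$. The only point requiring care is the asymptotic regime. The $O$-constant depends only on $m-t$ and not on $q$, because the factor $q^{(n-m)(m-t)}$ is exact and the Johnson-graph factor does not involve $q$. So the statement holds uniformly, whether $q$ grows with $n,m,t$ fixed or $n$ grows.

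There is no real obstacle. The one step to state carefully is this independence of the Johnson-graph bound from $q$, together with the use of a prime $p$ close to $n$ (via Bertrand, or via Baker--Harman--Pintz for the $(1+o(1))$ constant).
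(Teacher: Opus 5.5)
Your proposal is correct and follows the paper's route: the corollary comes from plugging the Graham--Sloane bound of Eq.~\eqref{eq:ordergeneralizedJohnson} for $\chi(J(n,m,t))$ into Theorem~\ref{th:maincolouringnbig}. Your Bertrand-based estimate $\chi(J(n,m,t))\le 2(2n+2)^{m-t}$ is a valid substitute for the Baker--Harman--Pintz step the paper uses; it holds for all $n$ and makes the $q$-independence of the implied constant explicit.
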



We now study the case $m < n < 2m$.  
In this range, an upper bound on the chromatic number of $J_q(n,m,t)$ can be obtained by duality.  
The following result holds.

\begin{thm} \label{th:maincolouringnsmall}
    Assume that $m < n < 2m$. Then 
    \[
    \chi  \left(J_q(n, m, t)\right) 
      \leq  
    \chi  \left(J(n, m, t)\right)  q^{m(m-t)}.
    \]
\end{thm}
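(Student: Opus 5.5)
The plan is to reduce to the regime $n \ge 2m$ through the duality isomorphism \eqref{Isomorphism}, and then apply Theorem~\ref{th:maincolouringnbig}. Put $m' := n-m$ and $t' := n-2m+t$. Since $n<2m$, we have $2m' = 2n-2m < n$, so $n \ge 2m'$. Also $m'-t' = m-t$, and $t \le m-1$ gives $t' \le m'-1$. The one condition that may fail is $t' \ge 1$, i.e.\ $t > 2m-n$. I will therefore split into two cases, and I expect handling the failure of this condition to be the only real obstacle.

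\emph{Case $t \ge 2m-n+1$.} Here $1 \le t' \le m'-1$, so $J_q(n,m',t')$ is a genuine power of a Grassmann graph with $n \ge 2m'$. By \eqref{Isomorphism} and Theorem~\ref{th:maincolouringnbig},
\[
\chi(J_q(n,m,t)) = \chi(J_q(n,m',t')) \le \chi(J(n,m',t'))\, q^{(n-m')(m'-t')} = \chi(J(n,n-m,n-2m+t))\, q^{m(m-t)}.
\]
It remains to identify the Johnson factor. I will use the combinatorial analogue of duality, namely complementation $S \mapsto [n]\setminus S$. For $m$-subsets $S,T$ of $[n]$ we have $|S^c \cap T^c| = n - 2m + |S\cap T|$. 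Hence $|S\cap T| \ge t$ if and only if $|S^c\cap T^c| \ge t'$, so $J(n,m,t) \cong J(n,n-m,n-2m+t)$. Substituting $\chi(J(n,n-m,n-2m+t)) = \chi(J(n,m,t))$ gives the claimed bound.

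\emph{Case $t \le 2m-n$.} Any two $m$-spaces of $\F_q^n$ meet in dimension at least $2m-n \ge t$, so $J_q(n,m,t)$ is complete. Likewise any two $m$-subsets of $[n]$ meet in at least $2m-n \ge t$ elements, so $J(n,m,t)$ is complete and $\chi(J(n,m,t)) = \binom{n}{m}$. The number of vertices of $J_q(n,m,t)$ is bounded by summing over identifying vectors: there are $\binom{n}{m}$ identifying vectors, and each RREF class with a fixed identifying vector has at most $q^{m(n-m)}$ members, because its free entries lie in $m$ rows and $n-m$ non-pivot columns. Hence
\[
\chi(J_q(n,m,t)) = \qbin{n}{m}_q \le \binom{n}{m} q^{m(n-m)} \le \chi(J(n,m,t))\, q^{m(m-t)},
\]
where the last step uses $n-m \le m-t$, which is equivalent to $t \le 2m-n$.

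Combining the two cases proves Theorem~\ref{th:maincolouringnsmall}.
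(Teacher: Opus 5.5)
Your proof is correct, and in its main case it is the paper's argument: dualize to $J_q(n,n-m,n-2m+t)$, apply Theorem~\ref{th:maincolouringnbig}, and identify the Johnson factor via complementation $S\mapsto[n]\setminus S$, which you verify where the paper only calls it easy to observe. Your separate case $t\le 2m-n$ closes a gap the paper's proof passes over. There $n-2m+t\le 0$, so Theorem~\ref{th:maincolouringnbig} does not apply: an MRD code in $M_{(n-m)\times m}(\F_q)$ with minimum distance $m-t+1>n-m$ cannot exist. Both graphs are then complete, and your direct count $\qbin{n}{m}_q\le\binom{n}{m}q^{m(n-m)}\le\chi(J(n,m,t))\,q^{m(m-t)}$ settles that case correctly.
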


\begin{proof}
By (\ref{Isomorphism}), the graph $J_q(n,m,t)$ is isomorphic to  
$J_q(n,n-m,n-2m+t)$.  
Hence,
\[
\chi(J_q(n,m,t)) = \chi(J_q(n,n-m,n-2m+t)).
\]
Since in this case $n > 2(n-m)$, we may apply Theorem \ref{th:maincolouringnbig} to obtain
\[
\chi(J_q(n,m,t)) 
= \chi(J_q(n,n-m,n-2m+t))
\leq 
\chi(J(n,n-m,n-2m+t))  q^{m(m-t)}.
\]
Finally, it is easy to observe that 
\[
\chi(J(n,m,t)) = \chi(J(n,n-m,n-2m+t)),
\]
which completes the proof.
\end{proof}

Again, by using the bound on the chromatic number of the powers of the Johnson graph described in Eq. \eqref{eq:ordergeneralizedJohnson}, 
we get the following result.

\begin{cor}\label{cor2}
     Assume that $m < n < 2m$. Then  \[\chi(J_q(n, m, t))=  O(n^{m-t}q^{m(m-t)}).\]
\end{cor}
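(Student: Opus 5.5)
This corollary follows by inserting the Graham--Sloane bound on the power of the Johnson graph into Theorem~\ref{th:maincolouringnsmall}. Concretely, Theorem~\ref{th:maincolouringnsmall} gives, for $m<n<2m$,
\[
\chi(J_q(n,m,t)) \le \chi(J(n,m,t))\, q^{m(m-t)},
\]
so it suffices to show $\chi(J(n,m,t)) = O(n^{m-t})$ uniformly in the relevant parameters. The bound then holds with the implied constant independent of $q$.

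For the Johnson factor I will invoke Eq.~\eqref{eq:ordergeneralizedJohnson}: $\chi(J(n,m,t)) \le r = (p^{m-t+1}-1)/(p-1)$, where $p$ is the smallest prime with $p\ge n+1$. Two elementary estimates finish it. First, $r \le 2p^{m-t}$, since $r=\sum_{i=0}^{m-t}p^i$ and $p\ge 2$. Second, Bertrand's postulate gives $p\le 2(n+1)\le 4n$. Hence $\chi(J(n,m,t)) \le 2\cdot 4^{m-t} n^{m-t}$. I prefer Bertrand over Baker--Harman--Pintz here because it avoids any ``sufficiently large $n$'' caveat. I treat $m-t$ as fixed, as the $O$-notation implicitly does.

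The one point to watch is the range of the duality step. The equality $\chi(J(n,m,t)) = \chi(J(n,n-m,n-2m+t))$ used inside Theorem~\ref{th:maincolouringnsmall} is already established. Moreover, the Johnson factor in the final display carries exponent $m-t$, not $(n-m)-(n-2m+t)$, and these two coincide, so the degree in $n$ is consistent. Multiplying the two factors gives $\chi(J_q(n,m,t)) = O(n^{m-t}q^{m(m-t)})$. There is no real obstacle; the only care needed is to make the constant in the prime-gap step explicit.
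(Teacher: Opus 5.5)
Your proposal is correct and is exactly the paper's argument, namely Theorem~\ref{th:maincolouringnsmall} combined with the Graham--Sloane bound \eqref{eq:ordergeneralizedJohnson}; using Bertrand's postulate to get $\chi(J(n,m,t))\le 2\cdot 4^{m-t}n^{m-t}$ for all $n$ is just a harmless explicit version of the paper's $(1+o(1))n^{m-t}$. The range issue that genuinely deserves watching is $t\le 2m-n$, where $n-2m+t\le 0$ and the MRD code needed in Theorem~\ref{th:maincolouringnbig} does not exist; the bound still survives there, because $J_q(n,m,t)$ is then complete and $\qbin{n}{m}_q\le\binom{n}{m}q^{m(n-m)}\le\binom{n}{m}q^{m(m-t)}$.
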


\section{Concluding Remarks}

In this paper, we studied the chromatic number of the Grassmann graph and of its powers,
introducing a new colouring technique based on MRD codes. Our bounds show that the asymptotic growth of $\chi(J_q(n,m,t))$ is governed by the size of
the largest natural cliques in the graph, up to polynomial factors arising from the
corresponding Johnson graph.
In particular, for fixed parameters $n$, $m$, and $t$, our results asymptotically determine
the behaviour of $\chi(J_q(n,m,t))$ as a function of $q$.
More precisely, as $q \to \infty$,
\[
\chi(J_q(n,m,t)) = \Theta\!\left(q^{(m-t)\min\{n-m,m\}}\right).
\]

On the other hand, when $q$ is fixed and $n$ grows, the exponential term becomes constant,
and the dominant contribution is polynomial in $n$, inherited from the chromatic behaviour
of the Johnson graph or its powers. Therefore, one interesting direction for future research is to investigate whether
the polynomial factor \(n^{m-t}\) appearing in Corollary \ref{cor1} and Corollary \ref{cor2} can be improved,
or possibly even eliminated, in certain regimes of the parameters. When $n$ is large enough compared to $q$, the result in \cite{dhaeseleer2025chromatic} still yields the best-known upper bound for the chromatic number of the standard Grassmann graphs (i.e. $J_q(n, m, m-1)$). It would be of interest to see if sharper results can be obtained in special cases, where additional geometric interpretations may be exploited.

\bibliographystyle{abbrv}
\bibliography{sample.bib}

\end{document}